\theoremstyle{plain} 
\newtheorem{theorem}{Theorem}[section]
\newtheorem{proposition}[theorem]{Proposition}
\newtheorem{remark}[theorem]{Remark}
\newtheorem{problem}[]{Problem}
\numberwithin{equation}{section}
\DeclareMathOperator{\dist}{dist}
\DeclareMathOperator{\clos}{clos}
\newcommand{\hpol}{h_{pol}}
\newcommand{\htop}{h_{top}}
\renewcommand{\epsilon}{\varepsilon}
\newcommand{\R}{\mathbb R}
\newcommand{\Z}{\mathbb Z}
\newcommand{\N}{\mathbb N}
\newcommand{\Suno}{{\mathbb S}^1}
\theoremstyle{plain}
\title{Polynomial Entropy and Expansivity}
\author{Alfonso Artigue, Dante Carrasco-Olivera and Ignacio Monteverde}
\begin{document}

\maketitle

\begin{abstract}
In this paper we study the polynomial entropy of homeomorphism on
compact metric space. We construct a homeomorphism on a compact
metric space with vanishing polynomial entropy that it is not
equicontinuous. Also we give examples with arbitrarily small
polynomial entropy. Finally, we show that expansive homeomorphisms
and positively expansive maps of compact metric spaces with infinitely
many points have polynomial entropy greater or equal than 1.
\end{abstract}

\section{Introduction}

The study of chaotic dynamical systems presents several difficult
problems as, for example, to estimate the topological entropy 
as can be seen in \cites{Bowen72,HaKa,W82,BS} and, with more recent approaches, in \cites{ACV,CMM15,AKU}. 
Some systems are known to have positive
topological entropy.
For instance, in \cite{Fathi} it is shown that every expansive
homeomorphism on a compact metric space of positive topological
dimension has positive topological entropy. This result is extended
in \cite{K93} for continuum-wise expansivity.

For dynamical systems with vanishing topological entropy the
\emph{polynomial entropy} becomes an interesting object to measure
the complexity of the orbit structure. The concept of polynomial
entropy, that is recalled in \S \ref{secPolEnt}, was considered in
\cite{JPM13} with a detailed analysis of the basic properties and
examples from Hamiltonian dynamics. In \cites{La121,La122} it is
considered the problem of finding Riemannian metrics with minimal
polynomial entropy.

It is easy to prove that the polynomial entropy of an equicontinuous
homeomorphism is zero. In fact, a stronger result is true: a
homeomorphism is equicontinuous if and only if it has bounded
complexity, see \cite{BHM}*{Proposition 2.2} and \cite{Morales2015}*{Corollary 1.4}. Recall that $f\colon M\to M$ is
\emph{equicontinuous} if for all $\epsilon>0$ there is $\delta>0$
such that if $x,y\in X$, $\dist(x,y)<\delta$ then
$\dist(f^n(x),f^n(y))<\epsilon$ for all $n\in\mathbb{Z}$. For an
equicontinuous homeomorphism of a compact metric space there is a
compatible metric that makes it an isometry. Such metric can be
defined as $\dist'(x,y)=\sup_{n\in\mathbb{Z}}\dist(f^n(x),f^n(y))$,
for all $x,y\in X$.
On the circle the converse is true (see \cite{La}*{Theorem 1}): if a
homeomorphism of the circle has vanishing polynomial entropy then it is
equicontinuous (i.e., conjugate to a rotation). In \S
\ref{secNonIsoNullEnt} we give an example of a homeomorphism of a
compact metric space with vanishing polynomial entropy that is not
equicontinuous. In \cite{Cas1}*{Theoreme 6.1}, for every $\alpha
\in(1,2)$ a subshift $\sigma$ with $h_{pol}(\sigma)=\alpha$ is
constructed; taking products, we obtain that every real number
larger than 1 is the polynomial entropy of an expansive
homeomorphism (note that subshifts are expansive). In \S \ref{secNonInt}  for all $\varepsilon>0$ we
construct a (non-expansive) homeomorphism $f$ of a compact metric with
$h_{pol}(f)\in(0,\epsilon)$; so, given $0<a<b$ there exists $g$ with
$h_{pol}(g)\in(a,b)$.

In \S \ref{secExpHomPosEnt} we show that the polynomial entropy of
an expansive homeomorphism on a compact metric space with infinitely
many points is greater or equal to 1. We also sketch the proof of the
corresponding result for positively expansive maps, extending
\cite{Morales2015}*{Theorem 1.2} where it is shown that the
complexity is unbounded.

\section{Basic properties of polynomial entropy}
\label{secPolEnt} 
In this section we recall the definitions of entropy that we will use. 
Also, we prove a simple criterion to obtain polynomial entropy equal or greater than 1.

Let $(X,\dist)$ be a compact metric space and
consider a homeomorphism $f\colon X\to X$. For $E\subset X$,
$n\in\mathbb{Z}$ and $\varepsilon>0$ we say that $E$
$(n,\varepsilon)$-{\it spans} with respect to $f$, if for each $y\in
X$ there is $x\in E$ such that $\dist(f^k(x),f^k(y))\leq\epsilon$ for
all $0\leq k\leq n-1$. Let $r_n(\epsilon)$ denote the minimum
cardinality of the set which $(n,\epsilon)$-spans. Let us consider
\[
\htop^{\varepsilon}(f) =\limsup_{n\to\infty}\frac{1}{n}\log
r_n(\varepsilon).
\]
Following \cite{Bowen72}, the \emph{topological entropy} of $f$ is
defined as
\[
 \htop(f) =\lim_{\varepsilon\to 0}\htop^{\varepsilon}(f).
\]
Analogously,
\[
\hpol^{\varepsilon}(f)  =  \limsup_{n\to\infty}\frac{1}{\log n}\log
r_n(\epsilon).
\]
According to \cite{JPM13}, the \emph{polynomial entropy} of $f$ is
defined by
\[
\hpol(f) = \lim_{\varepsilon\to 0}\hpol^{\varepsilon}(f).
\]
A set $F\subset X$ is $(n,\epsilon)$-\emph{separated} if for
different points $x,y\in F$ there is $0\leq k< n$ such that
$\dist(f^k(x),f^k(y))\geq\epsilon$. We denote by $s_n(\epsilon)$ the
maximal cardinality of an $(n,\epsilon)$-separated subset of $Y$.
The topological and polynomial entropies can be defined equivalently
in terms of separated sets (see \cites{BS,JPM13}). 
Note that the
subtle change of $n$ by $\log(n)$ (as in the classical topological
entropy), gives completely different values for the entropy of the system.


For $x\in X$ and $\varepsilon>0$ we denote by $B_\epsilon(x)$ the
open ball of radius $\varepsilon$ and centered at $x$. The following
result is a generalization of \cite{La}*{Proposition 2.1} and
\cite{JPM13}*{the end of the proof of Proposition 5}. We recall that
a point $x\in X$ is \emph{recurrent} if for all $\epsilon>0$ there
is $m\geq 1$ such that $f^m(x)\in B_\epsilon(x)$.

\begin{proposition}
\label{propRecu}
 If $\hpol(f)<1$ then for all $\epsilon>0$ there is $m>0$
 such that for all $x\in X$ there is $n\in\mathbb{Z}$ such that
 $f^n(x)\in B_\epsilon(x)$ and $0< n\leq m$.
 In particular, if $\hpol(f)<1$ then every point $x\in X$ is recurrent.
\end{proposition}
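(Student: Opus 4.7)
The plan is to prove the contrapositive: assume there is some $\epsilon>0$ that violates the conclusion, so that for every integer $m\geq 1$ one can find a point $x_m\in X$ with $\dist(x_m,f^n(x_m))\geq\epsilon$ for every $1\leq n\leq m$, and derive $\hpol(f)\geq 1$. From each such $x_m$ I will extract an $(m,\epsilon)$-separated set of cardinality $m$, and invoke the equivalent definition of polynomial entropy in terms of separated sets mentioned in the previous section.

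The key construction is the set of \emph{backward} iterates
\[
F_m=\{\,f^{-i}(x_m):0\leq i\leq m-1\,\}.
\]
It has exactly $m$ elements: if $f^{-i}(x_m)=f^{-j}(x_m)$ for some $0\leq i<j\leq m-1$, then applying $f^{j}$ yields $x_m=f^{j-i}(x_m)$ with $1\leq j-i\leq m$, contradicting the hypothesis. To check $(m,\epsilon)$-separation, for indices $0\leq i<j\leq m-1$ I would choose the time $k=j\in[0,m)$ and compute
\[
\dist\!\bigl(f^{j}(f^{-i}(x_m)),\,f^{j}(f^{-j}(x_m))\bigr)=\dist(f^{j-i}(x_m),\,x_m)\geq\epsilon,
\]
where the last inequality is the hypothesis on $x_m$ applied to the integer $j-i\in\{1,\dots,m\}$.

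Consequently $s_m(\epsilon)\geq m$ for every $m$, which gives $\hpol^{\epsilon}(f)\geq\limsup_m\frac{\log m}{\log m}=1$, and by monotonicity of $\hpol^{\varepsilon'}(f)$ in $\varepsilon'$ one concludes $\hpol(f)\geq 1$, a contradiction. The ``in particular'' assertion is immediate: for every $x\in X$ and every $\epsilon>0$, the main conclusion supplies some $n\in(0,m(\epsilon)]$ with $f^n(x)\in B_\epsilon(x)$, which is the definition of recurrence.

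The only delicate step, and the reason the argument works at all, is the decision to use backward rather than forward iterates of $x_m$. The hypothesis only controls distances of the special form $\dist(x_m,f^l(x_m))$, with $x_m$ itself as one endpoint, so in order to invoke it we must choose the spanning time $k$ so that one of the two compared points returns exactly to $x_m$; the choice $k=j$ accomplishes this. A comparison of two forward iterates $f^i(x_m)$ and $f^j(x_m)$ would instead require control of $\dist(f^i(x_m),f^j(x_m))$, which the hypothesis does not provide, and that is the obstacle one has to diagnose before the construction falls into place.
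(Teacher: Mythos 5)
Your proof is correct and follows essentially the same route as the paper's: negate the conclusion, take the backward orbit segment of the bad point $x_m$, and observe that comparing $f^{-i}(x_m)$ with $f^{-j}(x_m)$ at time $k=j$ reduces to the controlled distance $\dist(f^{j-i}(x_m),x_m)\geq\epsilon$, yielding $s_m(\epsilon)\geq m$ and hence $\hpol(f)\geq 1$. The only cosmetic difference is that the paper takes the $m+1$ points $\{x_m,f^{-1}(x_m),\dots,f^{-m}(x_m)\}$ rather than your $m$ points, which changes nothing in the conclusion.
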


\begin{proof}
Arguing by contradiction, assume that $\hpol(f)<1$ and there is $\epsilon>0$ such
that for all $m\geq 1$ there is $x\in X$ for which $f^n(x)\notin
B_{\epsilon}(x)$ for all $n=1,2,\dots,m$. 
Define the set $F=\{x,f^{-1}(x),\dots,f^{-m}(x)\}$. 
Given two different points $u=f^{-j}(x),v=f^{-k}(x)\in F$ 
with $0\leq j<k\leq m$, we have that 
$f^k(v)=x$ and $f^k(u)=f^{k-j}(f^j(u))=f^{k-j}(x)$. 
As $0<k-j\leq m$ we conclude that $f^k(u)\notin B_\epsilon(x)$ and 
$\dist(f^k(u),f^k(v))>\epsilon$.
This proves that
$F$ is an $(m,\epsilon)$-separated set
with $m+1$ elements. Consequently, $s_m(\epsilon)\geq m+1$ and $\hpol(f)\geq 1$. This
contradiction finishes the proof.
\end{proof}

\section{Calculating the polynomial entropy}

In this section we will construct a family of examples depending on
a sequence $a_n$. For $a_n=e^{-n}$ we will obtain a homeomorphism
with vanishing polynomial entropy that is not equicontinuous. For
$a_n=1/n^c$ with $c>1$ we will obtain an example with non-integer
polynomial entropy.

We start with the general properties of our construction. Consider the
circle $\Suno=\R/\Z$  with the usual distance $\dist_S$. Given
$\{a_n\}_{n\geq 1}$ a decreasing sequence of positive real numbers
such that $a_n\rightarrow 0$, take $$M=\Suno\times (\{a_n\colon \
n\in\N\}\cup\{0\}),$$ equipped with the distance $\dist$ defined as
$$\dist((x_1,y_1),(x_2,y_2))=\max\{\dist_S(x_1,x_2),|y_2-y_1|\}.$$
The space $M$ is a countable union of circles. Let $f\colon M\to M$
be the function such that 
\begin{equation}
\label{ecuFuncf}
f(x,y)=(x+y,y). 
\end{equation}
In this way, $f$ acts as
a rotation of angle $a_n$ on each circle of $M$.

\begin{remark}
Observe that $f$ is not equicontinuous. This can be seen using the
following property: if $g\colon X\to X$ is equicontinuous, $p\in X$
is a fixed point of $g$ and $q_n\rightarrow p$ then
$\lim_{n\to+\infty}g^n(q_n)\rightarrow p$. In our
case, we have that there exists a sequence $q_n$ that converges to
$(0,0)$ but $f^n(q_n)\not\to (0,0)$. Then, the homeomorphism $f$ we
are considering is not equicontinuous.
\end{remark}

Given $N\in\N$ and $\epsilon>0$, we will find an
$(N,\epsilon)$-spanning set for $f$. We define
$$H=\min \{n\in\N: \ Na_n<\varepsilon\}$$
and
$$r=\left\lfloor \frac 1 \epsilon\right\rfloor+1.$$
We can take a set with $r$ elements $P=\{p_1, \ldots, p_r\}\subset
\Suno$ such that
$$\dist_S(p_i,p_{i+1})<\varepsilon$$ for all $i=1,\ldots r$.
Define $A(N,\epsilon)=P\times(\{a_n: \ n\leq H\}\cup\{0\})$.

\begin{proposition}
\label{propANepSpan}
 The set $A(N,\epsilon)$ is an $(N,\epsilon)$-spanning set for $f$ with
 \[
  \# A(N,\epsilon)=(\left\lfloor 1/ \epsilon\right\rfloor+1)(H+1).
 \]
\end{proposition}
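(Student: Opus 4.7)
The plan is to verify the two assertions separately. The cardinality identity is a direct product count: by construction $|P| = \lfloor 1/\epsilon \rfloor + 1$, and $\{a_n : n \leq H\} \cup \{0\}$ contains $H + 1$ distinct elements since $(a_n)$ is a strictly decreasing positive sequence.

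For the spanning property, I would fix an arbitrary $(x,y) \in M$ and produce $(p,z) \in A(N,\epsilon)$ that $(N,\epsilon)$-shadows $(x,y)$, splitting into two cases according to whether $y$ belongs to the admissible set $\{0, a_1, \ldots, a_H\}$ of second coordinates in $A(N,\epsilon)$. In the easy case ($y$ admissible), I match $y$ exactly by setting $z = y$. Then both $(p, z)$ and $(x, y)$ live on the same circle $\Suno \times \{y\}$, on which $f$ acts as a rotation by the common angle $y$; since rotations are isometries of $\Suno$, we have $\dist(f^k(p, y), f^k(x, y)) = \dist_S(p, x)$ for all $k$. The hypothesis $\dist_S(p_i, p_{i+1}) < \epsilon$ lets me pick $p \in P$ with $\dist_S(p, x) < \epsilon/2$, which is more than enough.

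The delicate case is $y = a_n$ with $n > H$; here no exact match for $y$ exists in $A(N,\epsilon)$, so I would use $(p, 0)$. The orbit $f^k(p, 0) = (p, 0)$ is stationary, while $f^k(x, a_n) = (x + k a_n, a_n)$ sweeps an arc of length $(N-1) a_n$ in $\Suno \times \{a_n\}$. The definition of $H$ together with the monotonicity of $(a_n)$ gives $(N-1) a_n < N a_H < \epsilon$, which also controls the vertical separation $|a_n - 0| = a_n < \epsilon/N \leq \epsilon$. I expect the angular estimate to be the main obstacle: the naive choice of $p$ close to $x$ only yields $\dist_S(p, x + k a_n) \leq \dist_S(p, x) + (N-1) a_n$, potentially near $3\epsilon/2$, which is too large. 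The remedy is to center $p$ not at $x$ but near the midpoint $m := x + (N-1) a_n / 2$ of the orbit arc; then $\dist_S(p, m) < \epsilon/2$ by the density of $P$, while $\dist_S(m, x + k a_n) \leq (N-1) a_n / 2 < \epsilon/2$ for every $0 \leq k \leq N-1$, and the triangle inequality closes the estimate. This centering trick is the only place where the precise definition of $H$ is actually used, and it is the crux of the proof.
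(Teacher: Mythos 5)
Your proof is correct and follows essentially the same two-case argument as the paper: match the second coordinate exactly when $y\in\{0,a_1,\dots,a_H\}$ and use the isometry of the rotation, and otherwise shadow with a point of $\Suno\times\{0\}$ using $Na_n<Na_H<\epsilon$. The only difference is that you spell out the midpoint-centering step needed to find a single $p\in P$ within $\epsilon$ of the entire arc $\{x+ka_n\}_{0\le k<N}$, a detail the paper's proof asserts without justification.
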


\begin{proof}
Let $q=(q_1,q_2)\in M$. If $q_2\geq a_H$ then there exists
$p=(x,q_2)\in A(N,\epsilon)$ such that $\dist_S(x,q_1)<\epsilon$. As
$f$ is a rotation in $\Suno\times\{q_1\}$, then
$$\dist(f^k(p),f^k(q))=\dist_S(x,q_1)<\epsilon$$ for all $k\in\Z$ (in particular, if $0\leq k<N$).

If $q_2<a_H$ then $Nq_2<Na_H<\epsilon$. Then, there is an element
$p=(x,0)\in A(N,\epsilon)$ such that $\dist_S(x,q_1+kq_2)<\epsilon$
for all $k=0,\ldots, N-1$. Therefore,
$$
\begin{array}{ll}
\dist(f^k(x,0),f^k(q_1,q_2)) & =\dist((x,0),(q_1+kq_2,q_2))\\
&=\max\{\dist_S(x,q_1+kq_2),q_2\}<\epsilon
\end{array}
$$
for all $k=0,\ldots, N-1$. This proves that $A(N,\epsilon)$ is an
$(N,\epsilon)$-spanning set for $f$. The cardinality of
$A(N,\epsilon)$ is easily calculated from definitions.
\end{proof}

\subsection{Vanishing polynomial entropy}
\label{secNonIsoNullEnt} The next result gives an example of a
homeomorphism with vanishing polynomial entropy that is not
equicontinuous.

\begin{proposition}
\label{propH0} If $a_n=e^{-n}$ then $\hpol(f)=0$.
\end{proposition}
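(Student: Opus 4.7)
The plan is to use the explicit $(N,\epsilon)$-spanning set $A(N,\epsilon)$ furnished by Proposition \ref{propANepSpan}, so that we already have the bound
\[
r_N(\epsilon)\leq \left(\left\lfloor 1/\epsilon\right\rfloor+1\right)(H+1),
\]
where $H=\min\{n\in\N:Na_n<\epsilon\}$. Since we only want to show vanishing of $h_{pol}$, we can just keep $\epsilon>0$ fixed, push $N\to\infty$, and take $\limsup_{N\to\infty}\log r_N(\epsilon)/\log N$.

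First I would compute $H$ for the specific sequence $a_n=e^{-n}$: the condition $Ne^{-n}<\epsilon$ is equivalent to $n>\log(N/\epsilon)$, so $H\leq \lceil \log(N/\epsilon)\rceil+1$, in particular $H=O(\log N)$ as $N\to\infty$ (with $\epsilon$ fixed). Plugging this back in, the spanning-set size satisfies
\[
\log r_N(\epsilon)\leq \log\!\left(\left\lfloor 1/\epsilon\right\rfloor+1\right)+\log(H+1)=C(\epsilon)+O(\log\log N).
\]

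Dividing by $\log N$, both terms tend to $0$ as $N\to\infty$: the first because it is a constant in $N$, the second because $\log\log N/\log N\to 0$. Therefore $h_{pol}^{\epsilon}(f)=0$ for every $\epsilon>0$, and taking $\epsilon\to 0$ we conclude $h_{pol}(f)=0$.

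There is no real obstacle here; the proof is essentially a direct substitution into the previous proposition. The only point to watch is keeping the dependence on $\epsilon$ separated from the dependence on $N$ when taking the $\limsup$ in $N$, which is precisely what makes the exponential decay $a_n=e^{-n}$ work: it forces $H$ to grow only logarithmically in $N$, and $\log(\log N)/\log N\to 0$ is exactly the estimate needed for polynomial entropy to vanish.
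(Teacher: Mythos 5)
Your argument is correct and follows exactly the paper's own route: substitute $a_n=e^{-n}$ into Proposition \ref{propANepSpan}, observe that $H=\lceil\log(N/\epsilon)\rceil$ grows only logarithmically in $N$, and conclude that $\log(\#A(N,\epsilon))/\log N\to 0$ for each fixed $\epsilon$. Your explicit remark about keeping the $\epsilon$- and $N$-dependence separated is a sound reading of the same computation the paper performs.
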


\begin{proof}
 In this case $H=\lceil-\log(\frac\varepsilon N)\rceil$.
 By Proposition \ref{propANepSpan} we have that $A(N,\epsilon)$ is an $(N,\epsilon)$-spanning set with
 \[
  \# A(N,\epsilon)=\left(\left\lfloor 1/ \epsilon\right\rfloor+1\right)(\lceil-\log(\varepsilon /N)\rceil+1)
 \]
 Then, for all $\varepsilon>0$ it holds that
$$\lim_{N\to\infty} \frac{\log(\# A(N,\epsilon))}{\log(N)}=
\lim_{N\to\infty} \frac{\log(\left\lfloor
1/\epsilon\right\rfloor+1)+\log( \lceil\log(N)-\log(\epsilon)
\rceil+1)}{\log(N)}=0.$$ This proves that $\hpol(f)=0$.
\end{proof}

The example given in Proposition \ref{propH0} is defined in the
space $M$ that is a countable union of circles. In particular, the
space is not connected.

\begin{problem}
If $f$ is a homeomorphism of a connected compact metric space with
$\hpol(f)=0$ is it true that $f$ is equicontinuous?
\end{problem}

A homeomorphism $f$ is \emph{distal} if for all $x\neq y$ it holds
that
\[
 \inf_{n\in\Z}\dist(f^n(x),f^n(y))>0.
\]
Note that equicontinuous homeomorphisms are distal. As remarked
above, our homeomorphism $f$ is not equicontinuous. It is easy to
see that it is distal.

\begin{problem}
If $f$ is a homeomorphism of a compact metric space such that
$\hpol(f)=0$ must $f$ be distal?. It would be interesting to know a
dynamical characterization of those homeomorphisms with vanishing
polynomial entropy.
\end{problem}

\subsection{Non-integer polynomial entropy}
\label{secNonInt} Now we will show that the polynomial entropy may
not be an integer number.

\begin{remark}
In the case of the topological entropy we can easily construct a
homeomorphism with non-integer topological entropy from a
homeomorphism with non-vanishing integer topological entropy.
Suppose that $f\colon X\to X$ is a homeomorphisms. Consider $Y$ as a
disjoint union of $n$ copies of $X$. Formally,
$Y=\{1,\dots,n\}\times X$. Define $g\colon Y\to Y$ as
$g(i,x)=(i+1,x)$ if $i=1,\dots,n-1$ and $g(n,x)=(0,f(x))$. Then,
$g^n$ restricted to $\{i\}\times X$ is conjugate to $f$, for all
$i=1,\dots,n$. This implies that $\htop(g)=\htop(f)/n$. Then, if
$\htop(f)$ is an integer we can take $n\in\Z^+$ such that
$\htop(f)/n$ is not an integer and we obtain a homeomorphism
$g$ with non-integer topological entropy. For the polynomial entropy
we have that $\hpol(f^n)=\hpol(f)$ for all positive integer $n$.
This is proved in \cite{JPM13}*{Proposition 2}. Then, this trick does
not work to obtain a homeomorphism with non-integer polynomial
entropy.
\end{remark}

\begin{proposition}
\label{propHpolNoInt}
 If $a_n=\frac 1 {n^c}$, with $c\geq 1$, then $\frac 1 {c+1}\leq \hpol(f)\leq\frac 1 c$, 
 where $f\colon M\to M$ is the homeomorphism given by (\ref{ecuFuncf}).
 \end{proposition}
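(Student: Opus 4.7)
The plan is to prove the two inequalities separately. The upper bound $\hpol(f)\le 1/c$ follows directly from Proposition \ref{propANepSpan}. Substituting $a_n=1/n^c$ into the defining condition $Na_n<\epsilon$ gives $n>(N/\epsilon)^{1/c}$, so $H\le (N/\epsilon)^{1/c}+1$ and
\[
r_N(\epsilon)\le\#A(N,\epsilon)\le(\lfloor 1/\epsilon\rfloor+1)\bigl((N/\epsilon)^{1/c}+2\bigr).
\]
Dividing $\log\#A(N,\epsilon)$ by $\log N$ and sending $N\to\infty$ gives $\hpol^\epsilon(f)\le 1/c$ for every $\epsilon$; the bound does not depend on $\epsilon$, so $\hpol(f)\le 1/c$.

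For the lower bound $\hpol(f)\ge 1/(c+1)$ I would construct an explicit $(N,\epsilon)$-separated set of size $\asymp N^{1/(c+1)}/\epsilon$. Fix a small $\epsilon>0$ and a large $N$. Take $R=\lfloor 1/\epsilon\rfloor$ equally spaced first coordinates $x_i=(i-1)/R$ and, on each of the first $J$ circles $\Suno\times\{a_j\}$ for $j=1,\dots,J$, take all $R$ of them. Choose $J$ as the largest integer for which the smallest adjacent gap $a_{J-1}-a_J$ still exceeds $4\epsilon/(N-1)$. Because $a_j-a_{j+1}\sim c/j^{c+1}$, this forces $J\asymp (cN/\epsilon)^{1/(c+1)}\asymp N^{1/(c+1)}$.

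Within a single circle the chosen points are $\epsilon$-separated at time $0$ and remain so because $f$ restricts to an isometric rotation. For two points on distinct circles $a_j\neq a_{j'}$, write $\alpha=a_j-a_{j'}$, $\delta=x_i-x_{i'}$, and $\beta=\dist_S(0,\alpha)$; the first-coordinate distance at time $k$ equals $\dist_S(\delta+k\alpha,0)$. Lifting to $\R$ and analyzing the integer parts of the consecutive iterates shows that if $\delta+k\alpha\in B_\epsilon(0)\pmod 1$ for all $0\le k<N$, then $\beta\le 2\epsilon/(N-1)$. It therefore suffices to check $\beta>2\epsilon/(N-1)$ for every distinct pair $j,j'\in\{1,\dots,J\}$, which would give $s_N(\epsilon)\ge RJ\gtrsim\epsilon^{-1}N^{1/(c+1)}$ and hence $\hpol(f)\ge 1/(c+1)$.

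The main obstacle is that $\beta$ can be much smaller than $|\alpha|$ when $a_j-a_{j'}$ is close to $1$, which happens for pairs of the form $(1,j')$ since $a_1=1$. A short case analysis (adjacent vs.\ non-adjacent pairs, and whether $a_j-a_{j'}$ exceeds $1/2$) shows that the minimum of $\beta$ over distinct pairs is attained by adjacent pairs near $j=J$ and equals $\sim c/J^{c+1}\ge 4\epsilon/(N-1)$, while the wrap-around pairs $(1,j')$ contribute $\beta=1/{j'}^c\ge 1/J^c$, which is larger for $J>c$. The exponent $1/(c+1)$ then emerges naturally from balancing the first-order spacing $a_j-a_{j+1}\sim c/j^{c+1}$ of consecutive circles against the drift requirement $\beta>2\epsilon/(N-1)$.
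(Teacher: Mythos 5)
Your proposal is correct and follows essentially the same route as the paper: the upper bound comes from the spanning set of Proposition \ref{propANepSpan}, and the lower bound from a separated set of roughly $\epsilon^{-1}(cN/\epsilon)^{1/(c+1)}$ points obtained by balancing the adjacent-circle speed gap $a_j-a_{j+1}\sim c/j^{c+1}$ against the drift threshold $\epsilon/N$. If anything, your version is slightly more careful than the paper's, since you explicitly handle the mod-$1$ wrap-around (e.g.\ the pairs involving $a_1=1$, on which $f$ acts trivially) that the paper's mean-value-theorem computation passes over.
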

\begin{proof}
In this case $H=\lceil (\frac{N}\varepsilon)^\frac 1
c\rceil$ and for all $\varepsilon>0$ it holds that
$$\lim_{N\to\infty} \frac{\log(\# A(N,\epsilon))}{\log(N)}=
\lim_{N\to\infty} \frac{\log(\left\lfloor \frac 1
\epsilon\right\rfloor+1)+\log( \lceil (\frac{N}\varepsilon)^\frac 1
c\rceil)}{\log(N)}= \frac 1 c.$$
 This proves that $\hpol(f)\leq\frac 1 c$.

To prove the other inequality we will construct a separating set.
We can take a set $Y\subset \Suno$ with $r-1$ elements such that
$\dist_S(z,w)\geq\varepsilon$ for all $z,w\in Y$, $z\neq w$. Define
\[
 S(N,\epsilon)=Y\times\{a_n: n<D\},
\]
where $D=(\frac{cN}{\varepsilon})^{\frac 1{c+1}} $. Let $p=(x,\frac
1 {n^c})$ and $q=(y,\frac 1 {m^c})$ be different points of
$S(N,\varepsilon)$, with $n\geq m$. If $x\neq y$ then $\dist(p,q)\geq
\dist_S(x,y)\geq \varepsilon$. If $x=y$ then
\[
\begin{array}{ll}
\dist(f^k(p),f^k(q))& =\dist\left((x+\frac k{n^c},\frac 1{n^c}),(x+\frac k{m^c},\frac 1 {m^c})\right)\\
                    & \geq \dist_S\left(x+\frac {k}{n^c},x+\frac k{m^c}\right).
\end{array}
\]
Applying the mean value theorem we obtain
$$\dist_S\left(x+\frac {k}{n^c},x+\frac k{m^c}\right)=\left|\frac {k}{n^c}-\frac k{m^c}\right|=
 k\left|\frac {1}{n^c}-\frac 1{m^c}\right|=k\,c\,\theta^{c-1}\left|\frac 1 n -\frac 1 m\right|$$
for some $$\frac 1 n<\theta<\frac 1 m.$$ As $n<D$, we have for some
$k\leq N$:
$$\dist(f^k(p),f^k(q))> k\, c\, \left(\frac 1 n\right)^{c-1}\frac{m-n}{mn}\geq \frac {kc}{n^{c+1}}>\varepsilon.$$
We conclude
$$\lim_{N\to\infty} \frac{\log(\# S(N,\epsilon))}{\log(N)}=
\lim_{N\to\infty} \frac{\log(\left\lfloor \frac 1
\epsilon\right\rfloor)+\log( \lfloor (\frac{cN}{\varepsilon})^{\frac
1{c+1}}\rfloor)}{\log(N)}= \frac 1 {c+1}.$$ Therefore
$\hpol(f)\geq\frac 1 {c+1}$ and the proof ends.
\end{proof}

\begin{remark}
Applying \cite{JPM13}*{Proposition 2} we know that if $f$ and $g$ are
homeomorphisms of compact metric spaces then $\hpol(f\times
g)=\hpol(f)+\hpol(g)$. This and Proposition \ref{propHpolNoInt} give
us that the set $\{\alpha\in\R: \alpha=h(f)$ for some homeomorphism
$ f\}$ is dense in $\R^+$.
\end{remark}

\begin{problem}
 Is every positive real number the polynomial entropy of a homeomorphism of a compact metric space?
\end{problem}

\begin{problem}
If $f$ is a homeomorphism of a connected compact metric space with
$\hpol(f)$ finite, is it true that $\hpol(f)$ is an integer number?
\end{problem}

\begin{remark}
The example of Proposition \ref{propHpolNoInt}
 shows that distality does not imply vanishing polynomial entropy.
\end{remark}

\section{Expansive systems}
\label{secExpHomPosEnt}
In this section we will show that expansive homeomorphisms and
positively expansive maps of compact metric spaces with infinitely many points
have positive polynomial entropy. These results are true and well
known in the case of topological entropy if the space has positive
topological dimension.

\subsection{Expansive homeomorphisms}
A homeomorphism $f\colon X\to X$ of a compact metric space $(X,\dist)$ is \emph{expansive} if there
exists $\delta>0$ (an \emph{expansivity constant}) such that if
$\dist(f^n(x),f^n(y))<\delta$ for all $n\in\Z$ then $x=y$. 
It is known that for an expansive homeomorphism it holds that 
$h_{top}(f)=h_{top}(f,\delta)$, see \cite{BS}*{Proposition 2.5.7} (also \cite{Bowen72}).
We remark that $$\hpol(f)=\hpol(f,\delta)$$ 
also holds for the polynomial entropy, if $\delta$ is an expansivity constant.
The proof is analogous.

As we said, the topological entropy of an expansive
homeomorphism on a compact metric space of positive topological
dimension is positive, see \cites{Fathi,K93}. Also, there are
expansive homeomorphisms with vanishing topological entropy, for
example, expansive homeomorphisms of a countable spaces and the
non-wandering set of a Denjoy circle diffeomorphism. 
We will show that the polynomial entropy of every expansive homeomorphism of an 
infinite compact metric space is greater than or equal to $1$ (in particular, positive).

Let $\sigma\colon \Sigma\to\Sigma$ be a two-sided full shift on
$l\geq 2$ symbols. We say that $M\subset \Sigma$ is a \emph{minimal
set} if $M$ is the closure of the orbit of each $x\in M$.

\begin{theorem}
\label{teoHpoExp}
 If $f$ is an expansive homeomorphism of a compact metric space with infinitely many points then
 $\hpol(f)\geq 1$.
\end{theorem}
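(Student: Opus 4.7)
The plan is to argue by contradiction using Proposition~\ref{propRecu}. Assume $\hpol(f)<1$ and let $\delta>0$ be an expansivity constant. Applying Proposition~\ref{propRecu} with $\varepsilon=\delta/2$ yields $m\geq 1$ such that for every $x\in X$ there exists $1\leq n\leq m$ with $\dist(f^n(x),x)<\delta/2$; the goal is to show that this bounded-return-time condition, combined with expansivity, forces $X$ to be finite and hence contradicts the standing assumption that $X$ is infinite.

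The approach is to pass to a symbolic model and invoke a Morse--Hedlund linear-complexity bound. Fix a finite open cover $\mathcal{U}=\{U_1,\dots,U_l\}$ of $X$ by sets of diameter less than $\delta/2$. Because $\delta$ is an expansivity constant, $\mathcal{U}$ is a generator: if $f^n(x)$ and $f^n(y)$ lie in a common element of $\mathcal{U}$ for every $n\in\mathbb{Z}$, then $\dist(f^n(x),f^n(y))<\delta$ for all $n$, forcing $x=y$. Selecting an itinerary $\phi(x)\in\{1,\dots,l\}^{\mathbb{Z}}$ for each $x$ yields an injective coding whose closure is a subshift $\Sigma_X$; by Zorn's lemma pick a minimal subsystem $M\subset\Sigma_X$, corresponding to a closed $f$-invariant minimal set $K\subset X$.

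When $K$ is infinite (and hence so is $M$), the Morse--Hedlund theorem gives $p_M(n)\geq n+1$ for every $n\geq 1$, where $p_M(n)$ counts admissible length-$n$ words in $M$. After refining $\mathcal{U}$ to a partition whose pieces are at positive pairwise distance $\eta>0$, distinct admissible words of length $n$ lift to $(n,\eta)$-separated points in $K\subset X$, so $s_n(\eta)\geq n+1$ and $\hpol(f)\geq\hpol(f,\eta)\geq 1$. In the complementary case where every minimal subset of $X$ is a finite periodic orbit, infiniteness of $X$ produces infinitely many periodic orbits with periods tending to infinity; expansivity makes the periodic points of period at most $n$ an $(n,\delta)$-separated set, again of size growing at least linearly in $n$. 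The main technical obstacle is precisely the refinement of the open cover to a partition with positive metric gaps while retaining the Morse--Hedlund lower bound, since naive partitioning (ordering and subtracting) can produce pieces that touch and thus fail to separate orbits by a uniform distance; this is the delicate point where expansivity and the geometry of $\mathcal{U}$ must be combined carefully.
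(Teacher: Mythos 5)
Your overall architecture is close to the paper's (contradiction via Proposition~\ref{propRecu}, reduction to a minimal subsystem, Morse--Hedlund on infinite minimal subshifts, a separate argument when all minimal sets are periodic orbits), but two steps do not go through as written. First, the symbolic reduction: you code all of $X$ with an open cover and then ask to ``refine $\mathcal{U}$ to a partition whose pieces are at positive pairwise distance $\eta$.'' This is impossible whenever $X$ (or the minimal set $K$) has a connected component with more than one point: a space admitting a finite partition into at least two sets at positive mutual distance is disconnected. You flag this as ``the delicate point where expansivity and the geometry of $\mathcal{U}$ must be combined carefully,'' but no amount of care fixes it in positive dimension; what is needed is a structural input. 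The paper uses Ma\~n\'e's theorem \cite{Ma} that every minimal set of an expansive homeomorphism is zero-dimensional, hence conjugate to a subshift, and only then applies \cite{MoHe}; alternatively one could dispose of the positive-dimensional case by \cite{Fathi} (positive topological entropy, so $\hpol=\infty$), but some such citation is indispensable and your proposal leaves the gap open.

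Second, the terminal case ``every minimal subset is a finite periodic orbit'' is handled incorrectly. You assert that infiniteness of $X$ yields infinitely many periodic orbits, but a priori $X$ could consist of non-periodic points whose orbit closures merely contain periodic minimal sets; this needs an argument. Moreover, even granting infinitely many periodic points, two distinct periodic points of period at most $n$ are only guaranteed by expansivity to separate at some time $k$ with $0\leq k<n^2$ (one must run through a common multiple of the two periods), so the set of such points is $(n^2,\delta)$-separated rather than $(n,\delta)$-separated; with at most linearly many periodic points of period at most $n$ this yields only $\hpol(f)\geq 1/2$, not the required contradiction with $\hpol(f)<1$. The paper avoids both problems by actually using the recurrence furnished by Proposition~\ref{propRecu}: it shows every periodic point is isolated (via a first-exit-time argument and expansivity), then that every recurrent point must be periodic once all minimal sets are periodic orbits, so $X$ is discrete and compact, hence finite --- contradicting the hypothesis. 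Your proposal extracts the bounded return time from Proposition~\ref{propRecu} but never uses it; that recurrence is precisely the missing engine of the endgame.
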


\begin{proof}
At first, we will prove the theorem in the case $f$ is the
restriction of a shift $\sigma\colon \Sigma\to\Sigma$ to a minimal
set $M$.

Let $\delta$ be an expansivity constant for $\sigma$, $x\in\Sigma$ be
a not periodic point for $\sigma$. From \cite{MoHe}*{Theorem 7.3} (or
\cite{Fer}*{Proposition 2}) we know that for every $n\geq 1$ there
exists $C\subset \Sigma$ with $n+1$ elements, included in the orbit
of $x$, such that if $a,b\in C$ with $a\neq b$ then
$\dist(\sigma^i(a),\sigma^i(b))>\delta$ for some $0\leq i\leq n$.
Now, taking $x\in M$, we can conclude that for
 all $\epsilon\leq\delta$ and for all $n\geq 1$
 there exists an $(n, \epsilon)$-separated set included in $M$ with $n+1$
 elements. So, $r_n(\epsilon)>n$ for all $\epsilon\leq\delta$ and for all $n\geq
 1$. From the definition, this implies that $h_{pol}(\sigma|_M)\geq
 1$.

 Now we will see the general case, that is, when the domain $X$ of $f$ is any compact metric space.
 Arguing by contradiction assume that $h_{pol}(f)<1$.
Proposition \ref{propRecu} implies that every point is recurrent.

Suppose that $p\in X$ is a periodic point, say $f^m(p)=p$. We will
show that there is $r>0$ such that $B_r(p)=\{p\}$. If this is not
the case we can take $x_n\to p$ with $x_n\neq p$ for all $n\geq 1$.
Since $f^m$ is expansive there is $\delta_0$ such that for all
$n\geq 1$ there is $k_n\in\Z$ with
\begin{equation}
 \label{ecuSaleBola}
f^{mk_n}(x_n)\notin B_{\delta_0}(p).
\end{equation}
Taking a subsequence we can suppose that $k_n\geq 0$ for all $n\geq
1$ (the other case is similar). We can also assume that $k_n$ is the
first number satisfying (\ref{ecuSaleBola}), that is,
$f^{mj}(x_n)\in B_{\delta_0}(p)$ for all $j=0,1,\dots, k_n-1$.
Taking a subsequence we can suppose that $f^{m(k_n-1)}(x_n)\to x$.
The continuity of $f$ implies that $f^{-mj}(x)\in
\clos(B_{\delta_0}(p))$ for all $j\geq 0$. Expansivity implies that
$f^{-mj}(x)\to p$ as $j\to+\infty$. This contradicts that $x$ is
recurrent and proves that every periodic point is an isolated point
of the space.

From \cite{Ma} we know that every minimal subset of $X$ is conjugate
to a subshift. As we know the theorem is true in this case, we
conclude that every minimal subset of $X$ is a periodic orbit. Since
every point is recurrent, we conclude that every point is periodic,
and then, every point is isolated. As the space is
compact, it must be finite. This contradiction proves the result.
\end{proof}

\begin{remark}
For a Sturmian minimal subshift it holds that $r_n(M,\delta)= n+1$ and
its polynomial entropy equals 1. In \cite{Cas} it is shown an
example of a subshift with vanishing topological entropy and
infinite polynomial entropy.
\end{remark}

\subsection{Positively Expansive Maps}

Let $f\colon X\to X$ be a continuous map of a compact metric space
$(X,\dist)$. We say that $f$ is \emph{positively expansive} if there is
$\delta>0$ such that if $\dist(f^n(x),f^n(y))<\delta$ for all $n\geq
0$ then $x=y$.

\begin{remark}
\label{rmkEntPosExp}
 If $f\colon X\to X$ is a positively expansive map and the topological dimension of $X$ is
 positive then $\htop(f)>0$, 
 hence $\hpol(f)=\infty$.
 This can be proved with the techniques of \cites{Fathi,K93}. 
\end{remark}

We recall that $X$ has positive topological dimension if and only if it is not totally disconnected.
In \cite{Morales2015}*{Theorem 1.2} it is shown that for every
positively expansive map there is $\epsilon>0$ such that
$\lim_{n\to+\infty}s_n(X,\epsilon)=+\infty$. We extend this result
as follows.

\begin{theorem}
\label{teorema-tipo_Kato}
 If $f\colon X\to X$ is a positively expansive map and $card(X)=\infty$ then $\hpol(f)\geq 1$.
\end{theorem}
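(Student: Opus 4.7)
The plan is to split into two cases based on the topological dimension of $X$. If $\dim X>0$, Remark \ref{rmkEntPosExp} already gives $\htop(f)>0$, so $r_n(\epsilon)$ grows exponentially for some $\epsilon>0$ and hence $\hpol(f)=\infty\ge 1$.

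Assume then $\dim X=0$, so that $X$ is totally disconnected. Let $\delta>0$ be an expansivity constant for $f$. Since $X$ is compact Hausdorff with a basis of clopen sets, we may cover it by finitely many clopen sets of diameter less than $\delta$ and refine the cover into a finite clopen partition $\mathcal{P}=\{P_1,\dots,P_k\}$ all of whose cells have diameter less than $\delta$. The associated itinerary
\[
\tau\colon X\to\{1,\dots,k\}^{\mathbb{N}},\qquad \tau(x)_n=i\iff f^n(x)\in P_i,
\]
is continuous (the preimage of any cylinder is a finite intersection of preimages of clopens and hence clopen), satisfies $\tau\circ f=\sigma\circ\tau$ where $\sigma$ is the one-sided shift, and is injective: if $\tau(x)=\tau(y)$ then $\dist(f^n(x),f^n(y))<\delta$ for every $n\ge 0$, and positive expansivity forces $x=y$. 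By compactness, $\tau$ is a topological conjugacy of $f$ onto the closed subshift $\Sigma:=\tau(X)$.

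Since $\card(X)=\infty$, $\Sigma$ is an infinite one-sided subshift, so the one-sided Morse--Hedlund theorem \cite{MoHe}*{Theorem 7.3} yields $p_\Sigma(n)\ge n+1$ for every $n\ge 1$. Set $\epsilon=\min_{i\ne j}\dist(P_i,P_j)>0$, which is positive because the cells of $\mathcal{P}$ are disjoint compact subsets of a Hausdorff space. Choosing $n+1$ elements of $\Sigma$ with pairwise distinct length-$n$ initial blocks, the preimages under $\tau$ form an $(n,\epsilon)$-separated subset of $X$ of cardinality $n+1$; therefore $s_n(\epsilon)\ge n+1$ for all $n\ge 1$ and $\hpol(f)\ge 1$. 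The main delicate point is the construction of the clopen generator, which is where total disconnectedness and positive expansivity must be combined; everything else is a direct one-sided transcription of the first paragraph of the proof of Theorem \ref{teoHpoExp}.
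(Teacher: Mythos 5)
Your proof is correct and follows essentially the same route as the paper's (sketched) argument: the dichotomy between positive topological dimension (where Remark \ref{rmkEntPosExp} gives $\hpol(f)=\infty$) and total disconnectedness (where $f$ is conjugated to a one-sided subshift and the Morse--Hedlund bound $p(n)\ge n+1$ is applied). You merely supply the details the paper's sketch leaves implicit, namely the explicit clopen generator, the itinerary conjugacy, and the pull-back of the $n+1$ distinct words to an $(n,\epsilon)$-separated set.
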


\begin{proof}[Sketch of the proof]
As explained in Remark \ref{rmkEntPosExp}, if $X$ has positive dimension then $\hpol(f)=\infty$. 
If $X$ is totally disconnected it is known that $f$ is conjugate with a one-sided subshift. 
Therefore, the proof follows applying 
\cite{MoHe}*{Theorem 7.3} as in the proof of Theorem \ref{teoHpoExp}.
\end{proof}

\begin{center}
\small\small\thanks{Acknowledgements}
\end{center}
We thank Mauricio Achigar and José Vieitez for useful conversations
during the preparation of the paper. The second author would like to
thank the Departamento de Matem\'atica y Estad\'\i stica del Litoral at the
Universidad de la Rep\'ublica, Uruguay, for partial support and for
its hospitality.

\footnotesize
Alfonso Artigue and Ignacio Monteverde\\
Departamento de Matem\'atica y Estad\'{\i}stica del Litoral\\
Universidad de la Rep\'ublica\\
Gral. Rivera 1350, Salto, Uruguay\\
{\em{e-mail address:}} artigue@unorte.edu.uy, ignacio@cmat.edu.uy
\vspace{10pt}

\noindent Dante Carrasco-Olivera\\
Grupo de Investigaci\'on en Sistemas Din\'amicos y Aplicaciones\\
Departamento de Matem\'atica\\
Facultad de Ciencias\\
Universidad del B\'{\i}o B\'{\i}o\\
Av. Collao 1202, Casilla 5-C, Concepci\'on, Chile\\
{\em{e-mail address:}} dcarrasc@ubiobio.cl \vspace{10pt}


\begin{thebibliography}{10}

\bibitem{ACV}
\hspace{.2cm} J. F. Alves, M. Carvalho and C.H.  V\'asquez,
\newblock A variational principle for implulsive semiflows,
\newblock\emph{ Journal Differential Equations}, \textbf{259} (2015),
4229--4252.

\bibitem{AKU}
\hspace{.2cm} J. M. Amig\'o, K. Keller and V. A. Unakafova,
\newblock On entropy, entropy-like quantities, and applications,
\newblock\emph{Discrete and Continuous Dynamical Systems, Series B},
\textbf{20}(10) (2015), 3301--3343.

\bibitem{BHM}
\hspace{.2cm} F. Blanchard, B. Host and A. Mass,
\newblock Topological complexity,
\newblock \emph{Ergodic Theory and Dynamical Systems}, \textbf{20} (2000),
641--662.

\bibitem{Bowen72}
\hspace{.2cm}R. Bowen,
\newblock \emph{Entropy-expansive maps},
\newblock Transactions of the American Mathematical Society, \textbf{164},
223--331 (1972)

\bibitem {BS}
\hspace{.2cm}
 \newblock M. Brin and G. Stuck,
    \newblock  \emph{Introduction to dynamical systems},
    \newblock  Cambridge University Press, 2002


\bibitem {CMM15}
\hspace{.2cm}D. Carrasco-Olivera, R. Metzger and C. A. Morales,
\newblock \emph{Topological Entropy for Set Valued-Maps},
\newblock Discrete and Continuous Dynamical Systems, Serie B, \textbf{164} (2015),
3461--3474.

\bibitem{Cas1}
\hspace{2mm}J. Cassaigne,
\newblock \emph{Complexité et facteurs spéciaux}
\newblock Bull. Belg. Math. Soc.,  \textbf{4} (1997), 67-88.


\bibitem {Cas}
\hspace{.2cm}J. Cassaigne,
\newblock \emph{Constructing Infinite Words of Intermediate Complexity},
\newblock Conference Paper in Lecture Notes in Computer Science, \textbf{2450} (2002),
173--184

\bibitem {Fathi}
\hspace{.2cm}A. Fathi,
\newblock \emph{Expansiveness, hyperbolicity and Hausdorff dimension},
\newblock Communications in Mathematical Physics, \textbf{126} (1989)
, 249--262.

\bibitem {Fer}
\hspace{.2cm}S. Ferenczi,
\newblock \emph{Complexity of sequences and dynamical systems},
\newblock Discrete Mathematics, \textbf{206} (1999), 145--154.

\bibitem {HaKa}
\hspace{.2cm}
 \newblock B. Hasselblatt and A. Katok,
    \newblock  \emph{Introduction to the Modern Theory of Dynamical Systems},
    \newblock  Cambridge University Press, 1995.

\bibitem {MoHe}
\hspace{.2cm}G. Hedlund and M. Morse,
\newblock \emph{Symbolic Dynamics},
\newblock American Journal of Mathematics, \textbf{60} (1938), 815--866.

\bibitem{K93}
\hspace{.2cm}H. Kato,
\newblock \emph{Continuum-wise expansive homeomorphisms},
\newblock Canad. J. Math., \textbf{45}(3) (1993),
576--598.

\bibitem {La121}
\hspace{.2cm}C. Labrousse,
\newblock \textit{Polynomial growth of the volume of balls for zero-entropy geodesic systems},
\newblock Nonlinearity, \textbf{25} (2012), 3049--3069.

\bibitem {La122}
\hspace{.2cm}C. Labrousse,
\newblock \emph{Flat Metrics are Strict Local Minimizers for the Polynomial Entropy},
\newblock  Regular and Chaotic Dynamics, \textbf{17} (2012), 479--491.

\bibitem {La}
\hspace{.2cm}
\newblock C. Labrousse,
\newblock Polynomial Entropy for the Circle homeomorphisms and for $C^1$ Nonvanishing Vector Fields on $T^2$,
\newblock preprint, arXiv{math/1311.0213}.

\bibitem {LaMar}
\hspace{.2cm}C. Labrousse and J-P. Marco,
\newblock \emph{Polynomial Entropies for Bott Integrable Hamiltonian Systems},
\newblock Regular and Chaotic Dynamics, \textbf{19} (2014), 374--414.

\bibitem {Ma}
\hspace{.2cm}R. Man\'e,
\newblock \emph{Expansive homeomorphisms and topological dimension},
\newblock Trans. Amer. Math. Soc., \textbf{252} (1979), 313--319.



\bibitem {JPM13}
\hspace{.2cm}J-P. Marco,
\newblock \emph{Polynomial entropies and integrable Hamiltonian systems},
\newblock Regul. Chaotic Dyn., \textbf{18}(6) (2013), 623--655.

\bibitem {Morales2015}
\hspace{.2cm}C. A. Morales,
\newblock \emph{On the Complexity of Expansive Measures},
\newblock Acta Mathematica Sinica, English Series, \textbf{31} (2015),
1501--1507.

\bibitem {W82}
\hspace{.2cm}
 \newblock P. Walters,
    \newblock  \emph{An introduction to ergodic theory},
    \newblock Graduate Texts in Mathematics, 79. Springer-Verlag, New York-Berlin,
1982.

\end{thebibliography}
 \end{document}